\documentclass[11pt]{article}
\usepackage[a4paper,margin=0.87in]{geometry}
\usepackage{indentfirst}
\usepackage{amsmath,amssymb,amsthm,mathtools,microtype}
\usepackage[colorlinks=true,allcolors=black]{hyperref}
\usepackage[nameinlink,noabbrev]{cleveref}
\hypersetup{pdftitle={(alpha, c)-spaceability of continuous functions with prescribed Hausdorff graph dimension},pdfauthor={Daniel M. Pellegrino; Anselmo Raposo Jr.}}
\newtheorem{maintheorem}{Theorem}

\crefname{maintheorem}{theorem}{theorems}
\Crefname{maintheorem}{Theorem}{Theorems}
\newtheorem{lemma}{Lemma}[section]
\newcommand{\R}{\mathbb R}
\newcommand{\N}{\mathbb N}
\newcommand{\dimH}{\dim_{\rm H}}
\newcommand{\cc}{\mathfrak c}
\newcommand{\Hs}{H_s}
\title{$(\alpha,\cc)$-spaceability of continuous functions\\with prescribed Hausdorff graph dimension}
\author{Daniel M. Pellegrino\textsuperscript{1}\qquad Anselmo Raposo Jr.\textsuperscript{2}\\[0.6em]
\small\textsuperscript{1} Departamento de Matem\'atica, Universidade Federal da Para\'iba, Jo\~ao Pessoa, Brazil\\
\small\textsuperscript{2} Coordena\c{c}\~ao do Curso de Matem\'atica -- Bacharelado,\\
\small Universidade Federal do Maranh\~ao, S\~ao Lu\'is, Brazil\\[0.4em]
\small\texttt{daniel.pellegrino@academico.ufpb.br}\\
\small\texttt{anselmo.junior@ufma.br}}
\date{}
\begin{document}
\maketitle
\begin{abstract}
Let $1<s\le2$ and let $\Hs[0,1]$ consist of the real-valued continuous functions on $[0,1]$ whose graphs have Hausdorff dimension $s$. We prove that every finite-dimensional subspace $E\subset C[0,1]$ with $E\setminus\{0\}\subset\Hs[0,1]$ is contained in a closed complemented subspace isomorphic to $c_0$, all of whose nonzero elements still belong to $\Hs[0,1]$. This settles a question of Liu, Shi and Zhang. We also obtain the exact criterion: $\Hs[0,1]$ is $(\alpha,\cc)$-spaceable precisely when $\alpha$ is finite. We also show that every admissible finite-dimensional subspace is contained in a dense subspace of Hamel dimension $\cc$ whose nonzero elements have the same prescribed graph dimension. This also recovers maximal pointwise dense-lineability of $\Hs[0,1]$.
\end{abstract}
\par\medskip\noindent\textbf{Keywords.} Lineability; $(\alpha,\beta)$-spaceability; pointwise dense-lineability; Hausdorff dimension; graphs of continuous functions; complemented subspaces; $c_0$.
\par\medskip\noindent\textbf{2020 Mathematics Subject Classification.} 26A16, 28A78, 46B87, 15A03.

\section{Introduction}
Lineability studies large vector subspaces inside sets that need not themselves be vector spaces. A subset $A$ of a topological vector space $X$ is \emph{lineable} if $A\cup\{0\}$ contains an infinite-dimensional vector subspace. It is \emph{spaceable} if such a subspace can be chosen closed, and \emph{dense-lineable} if it can be chosen dense in $X$. The exclusion of the zero vector is essential when the defining property of $A$ is incompatible with zero. Spaceability and dense-lineability thus impose distinct topological requirements on the linear structures under consideration.

One of the earliest results in this direction is due to Gurariy \cite{Gurariy1966}, who proved in 1966 that the continuous nowhere differentiable functions on $[0,1]$, together with zero, contain an infinite-dimensional vector subspace. The seminal work of Aron, Gurariy and Seoane-Sep\'ulveda \cite{AGS2005} helped establish the systematic study of lineability and spaceability for classes of functions with exceptional properties. These developments show that irregularity need not prevent a rich linear structure.

Stronger notions ask whether a prescribed initial subspace can always be enlarged while retaining the same defining property. We write $\N=\{1,2,\ldots\}$, $\aleph_0=|\N|$, and $\cc=|\R|$; all vector-space dimensions below are Hamel dimensions, namely the cardinalities of algebraic bases.

Given cardinals $1\le\alpha\le\beta\le\dim X$, a set $A\subset X$ is \emph{$(\alpha,\beta)$-lineable} if there exists an $\alpha$-dimensional subspace $E\subset X$ with $E\setminus\{0\}\subset A$, and every such $E$ is contained in a $\beta$-dimensional subspace $F\subset X$ satisfying $F\setminus\{0\}\subset A$. If $F$ can always be chosen closed, $A$ is \emph{$(\alpha,\beta)$-spaceable}; if it can always be chosen dense in $X$, $A$ is \emph{$(\alpha,\beta)$-dense-lineable}. The existence condition on $E$ is retained in both variants. This extension approach was introduced in \cite{FPT2020} and further developed in \cite{FPRR2024}. We allow $\alpha=\beta$ in the definitions used here.

A set $A\subset X$ is \emph{$\beta$-dense-lineable} if there is a dense $\beta$-dimensional subspace $M\subset X$ such that $M\setminus\{0\}\subset A$. A nonempty set $A\subset X$ is \emph{pointwise $\beta$-lineable} if, for every $f\in A$, there exists a $\beta$-dimensional subspace $M_f\subset X$ with $f\in M_f$ and $M_f\setminus\{0\}\subset A$. If each $M_f$ can be chosen closed or dense in $X$, the corresponding properties are \emph{pointwise $\beta$-spaceability} and \emph{pointwise $\beta$-dense-lineability}, respectively. The pointwise approach was introduced in \cite{PR2021}; see also \cite{CMGP2024} for further developments concerning pointwise dense-lineability. We use \emph{maximal} when $\beta=\dim X$. For a nonempty set $A$ with $0\notin A$ that is stable under multiplication by nonzero scalars, pointwise $\beta$-dense-lineability is equivalent to $(1,\beta)$-dense-lineability: every $f\in A$ generates an admissible line, and every admissible line contains such a nonzero element. Requiring extension of every initial subspace can produce obstructions that are absent from ordinary lineability or spaceability.

Here we investigate this distinction for continuous functions with prescribed Hausdorff graph dimension. For a nonempty compact set $K\subset\R$, let $C(K)$ be the real Banach space of continuous functions on $K$, with norm $\|f\|_\infty=\sup_{x\in K}|f(x)|$. If $f$ is defined on $D\subset\R$, write
\[G_f(D)=\{(x,f(x)):x\in D\}.\]
For a set $B$ in a Euclidean space and $t>0$, we use the normalization
\[
\mathcal H^t(B)=\lim_{\delta\downarrow0}\inf\left\{
\sum_{i=1}^{\infty}(\operatorname{diam}U_i)^t:
B\subset\bigcup_{i=1}^{\infty}U_i,\quad
\operatorname{diam}U_i\le\delta\right\},
\]
where the infimum runs over countable covers and diameter is measured with the Euclidean metric. The \emph{Hausdorff dimension} is
\[\dimH B=\inf\{t>0:\mathcal H^t(B)=0\},\]
with $\inf\varnothing=+\infty$. In particular, this normalization gives $\mathcal H^1([0,1])=1$. For $1<s\le2$, define
\[\Hs[0,1]=\{f\in C[0,1]:\dimH G_f([0,1])=s\}.\]

For comparison with earlier work, we also recall the box dimensions. For a nonempty bounded set $B$ in a Euclidean space, let $N_\delta(B)$ be the smallest number of sets of diameter at most $\delta$ needed to cover $B$. Its \emph{lower} and \emph{upper box dimensions} are
\[
\underline{\dim}_{\rm B}B=\liminf_{\delta\downarrow0}\frac{\log N_\delta(B)}{-\log\delta},
\qquad
\overline{\dim}_{\rm B}B=\limsup_{\delta\downarrow0}\frac{\log N_\delta(B)}{-\log\delta}.
\]
When these agree, their common value is the \emph{box dimension}; see also \cite[Section 2]{LSZ2026}.

Spaceability of $\Hs[0,1]$ was established in \cite{BMPS2021}; see also the spaceability and latticeability result of \cite[Theorem 2.8]{CFST2022}. These results guarantee the existence of closed infinite-dimensional subspaces, while leaving open the extension of arbitrary admissible initial subspaces. Bonilla and collaborators \cite[Theorem 2.6]{BMPS2021} also establish maximal dense-lineability for continuous functions whose graphs have both Hausdorff and box dimensions equal to $s$. Related work \cite{LZS2025} studied algebraic genericity for continuous functions with prescribed fractal graph dimensions. Liu, Shi and Zhang \cite{LSZ2026} subsequently proved that $\Hs[0,1]$ is $(p,\cc)$-spaceable for $p=1,2$ and $(n,n+m)$-lineable for every $m,n\in\N$. They asked whether $(p,\cc)$-spaceability holds for all integers $p\ge3$; see \cite[Question 3.19]{LSZ2026}.

We answer this question by extending every admissible finite-dimensional subspace to a closed complemented subspace isomorphic to
\[
c_0=\{a=(a_j)_{j\ge1}\in\R^{\N}:a_j\to0\},
\qquad \|a\|_\infty=\sup_{j\ge1}|a_j|.
\]
Here $F\simeq c_0$ means that there is a bounded linear bijection from $F$ onto $c_0$ with bounded inverse. A closed subspace $F$ of a Banach space $X$ is \emph{complemented} if it is the range of a bounded linear projection $P:X\to X$, that is, $P^2=P$.

\begin{maintheorem}\label{thm:main}
Let $1<s\le2$ and let $E\subset C[0,1]$ be finite-dimensional, with $E\setminus\{0\}\subset\Hs[0,1]$. Then there is a closed complemented subspace $F\subset C[0,1]$ such that
\[E\subset F,\qquad F\simeq c_0,\qquad F\setminus\{0\}\subset\Hs[0,1].\]
\end{maintheorem}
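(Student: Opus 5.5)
The plan is to build $F$ as $E\oplus\overline{\operatorname{span}}\{g_j\}$, where the $g_j$ are functions with disjointly supported "fractal bumps" placed on a sequence of small intervals $I_j$ accumulating at a single point. Before placing them, the intervals will be chosen so that $E$ is harmless there.

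\textbf{Step 1: choose the intervals.} Let $E$ have dimension $n$. Since $E$ is finite-dimensional, its elements are uniformly continuous with a common modulus on the unit sphere of $E$. The key input is Hausdorff dimension of a graph is finitely stable and stable under bi-Lipschitz maps of the form $(x,y)\mapsto(x,y+h(x))$ with $h$ Lipschitz. I will pick a point $x_0$ and disjoint closed intervals $I_j\to x_0$, together with a closed set $K=[0,1]\setminus\bigcup_j \operatorname{int} I_j$. I also need functionals $\varphi_1,\dots,\varphi_n$ (point evaluations at points of $K$) that are biorthogonal to a basis $e_1,\dots,e_n$ of $E$.

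\textbf{Step 2: the bumps.} On each $I_j$, take $g_j\ge0$ supported in $I_j$ with $\|g_j\|_\infty=1$ and $\dimH G_{g_j}(I_j)=s$. I will use a rescaled Weierstrass/Takagi-type function of dimension $s$ multiplied by a cutoff; the existence of one function in $\Hs[0,1]$ is known from \cite{BMPS2021}. For $a\in c_0$, the map $T a=\sum a_j g_j$ is an isometric embedding of $c_0$ with disjoint supports. For the projection, pick points $t_j\in I_j$ with $g_j(t_j)=1$ and set $Qf=\sum_j (f(t_j)-f(x_0))g_j$. The coefficients tend to $0$ by continuity, $Q$ is bounded, and $Q$ is a projection onto $\overline{\operatorname{span}}\{g_j\}$ provided $g_i(t_j)=\delta_{ij}$ and $g_i(x_0)=0$. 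Combining $Q$ with $P_Ef=\sum\varphi_k(f)e_k$, applied appropriately (first $P_E$, then $Q(f-P_Ef)$), gives a projection onto $F=E+T(c_0)$. Here I must ensure the $\varphi_k$ vanish on the $g_j$, which holds by choosing the evaluation points in $K$ away from the supports. Hence $F$ is closed, complemented, and isomorphic to $E\oplus c_0\simeq c_0$.

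\textbf{Step 3: dimension of nonzero elements.} Let $h=e+Ta\neq0$. Write $G_h([0,1])=G_h(K)\cup\bigcup_j G_h(I_j)$; countable stability gives $\dimH G_h=\max\{\dimH G_h(K),\sup_j\dimH G_h(I_j)\}$. There are two cases.
\begin{itemize}
\item If $a=0$, then $h\in E\setminus\{0\}$ and we are done.
\item If $a_j\ne0$ for some $j$, then on $I_j$ we have $h=e+a_jg_j$. The lower bound $\ge s$ needs $\dimH G_{e+a_jg_j}(I_j)\ge s$. The upper bound needs every piece to be $\le s$. On $K$ this holds since $h=e$ there and $\dimH G_e\le s$ (or $e=0$). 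On each $I_j$ we need $\dimH G_{e+a_jg_j}(I_j)\le s$.
\end{itemize}

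\textbf{The main obstacle.} The real difficulty is controlling $\dimH$ of the sum $e+cg_j$ on $I_j$, because Hausdorff dimension is not additive under sums of rough functions. To handle this, I will choose the $I_j$ so that the restriction of every $e\in E$ to $I_j$ has graph of dimension $\le1$, in fact Lipschitz on $I_j$. This is not generally possible for rough $E$. The alternative I would try first is to use a strongly regular choice of $g_j$, namely a function whose graph has equal Hausdorff and upper box dimension $s$ and positive $\mathcal H^s$-type lower mass distribution. I would then exploit the following known fact: if $\overline{\dim}_{\rm B}G_u\le s$ and $\overline{\dim}_{\rm B}G_v\le s$, then $\overline{\dim}_{\rm B}G_{u+v}\le s$, since oscillation sums are subadditive. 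The difficulty is that $E$ only has Hausdorff dimension $s$, not box dimension. So I would instead pick $I_j$ inside the complement of a set where $E$ concentrates its dimension; alternatively, I would perturb by choosing the $g_j$ to be "generic" through a Baire-category argument in $C(I_j)$ relative to the finite-dimensional $E|_{I_j}$. Among functions supported in $I_j$, those $g$ for which every $e+cg$ with $c\neq0$ has graph dimension exactly $s$ should be residual. They should also be obtainable while respecting the normalization $g(t_j)=1$. Establishing this residuality, uniformly over the compact set of directions $(e,c)$ in the unit sphere of $E\oplus\R$, is the step I expect to be hardest.
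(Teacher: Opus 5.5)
\textbf{There is a genuine gap, and it sits exactly at the step you flag as the main obstacle.} That step is not a detail to fill in later: it is the whole content of the theorem, and with the bumps living on intervals it cannot be completed. For $e\in E$ and $c\neq0$ you have no way to show $\dimH G_{e+cg_j}(I_j)\le s$, which is needed as soon as $s<2$. Nor can you keep the lower bound when the dimension of $e$ is carried inside $\bigcup_j I_j$, where adding $a_jg_j$ may destroy it. On an interval the only a priori bound for a graph is the trivial one, $2$. Moreover, Hausdorff graph dimension is far from subadditive: the $f\in C[0,1]$ with $\dimH G_f([0,1])=1$ form a residual set $R$, so every continuous function lies in $R+R$.

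None of your three remedies works.
\begin{itemize}
\item Lipschitz restriction: $E$ may be spanned by a function whose graph has dimension $s$ over every subinterval (e.g.\ a Weierstrass function), so no choice of $I_j$ makes $E$ Lipschitz there.
\item Box dimension: subadditivity needs $\overline{\dim}_{\rm B}G_e\le s$, which you do not have.
\item Baire category in $C(I_j)$: this fails outright. Fix $e$ and $c\neq0$, and impose $g=0$ off $I_j$ and $g(t_j)=1$. The $g$ for which $e+cg$ is piecewise linear on $I_j$ are still dense. The usual open-cover argument then shows $\dimH G_{e+cg}(I_j)=1$ for residually many $g$. So the property you want is meagre, not residual.
\end{itemize}
Prevalence, by contrast, gives dimension $\dimH I_j+1=2$ \cite{BH}, which overshoots when $s<2$. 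Your idea of placing the $I_j$ away from where $E$ concentrates its dimension points toward the right localization. It still cannot give the upper bound on $I_j$ as long as the $I_j$ are intervals.

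\textbf{The missing idea is to confine the new roughness to a compact set of dimension exactly $s-1$, while keeping the dimension of $E$ outside it.} The paper proceeds in three steps.
\begin{itemize}
\item It localizes via \cite[Proposition 3.10]{LSZ2026}: there is a compact $K$ with $\dimH K=s-1$ and $\dimH G_h([0,1]\setminus K)=s$ for all $h\in E\setminus\{0\}$.
\item It builds an isometric, complemented copy of $c_0$ in $C(K)$ from Bayart--Heurteaux functions on pieces $L_m\subset K$ with $\dimH L_m\to s-1$. Each coordinate is spread over infinitely many pieces, so every nonzero element has graph dimension $s$ on $K$.
\item It extends these functions across the gaps of $K$ affinely.
\end{itemize}
Both bounds then come for free. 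On $K$, the inclusion $G_{h+g}(K)\subset K\times\R$ gives $\dimH G_{h+g}(K)\le s$, whatever $h$ is. On each gap $J$, $g$ is affine, so the shear $(x,y)\mapsto(x,y+b_Jx+c_J)$ is bi-Lipschitz. Hence $h+g$ has the same graph dimension on $J$ as $h$, and countable stability gives dimension $s$ off $K$.

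Your projection $f\mapsto\sum_j(f(t_j)-f(x_0))g_j$ and the conclusion $F\simeq E\oplus c_0\simeq c_0$ are fine and match the paper's operator $B$. They only become useful once this dimension control is in place.
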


\begin{maintheorem}\label{thm:threshold}
Let $1<s\le2$. For every cardinal $1\le\alpha\le\cc$,
\[\Hs[0,1]\text{ is }(\alpha,\cc)\text{-spaceable}
\quad\Longleftrightarrow\quad \alpha<\aleph_0.\]
\end{maintheorem}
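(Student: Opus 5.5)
The proof splits into the two implications. For ``$\Leftarrow$'', let $\alpha<\aleph_0$. The existence part of the definition asks for an $\alpha$-dimensional subspace $E$ with $E\setminus\{0\}\subset\Hs[0,1]$. This follows from spaceability of $\Hs[0,1]$ \cite{BMPS2021}: any closed infinite-dimensional subspace $M$ with $M\setminus\{0\}\subset\Hs[0,1]$ contains subspaces of every finite dimension. Now take an arbitrary such $E$. Apply \Cref{thm:main} to obtain a closed subspace $F\supset E$ with $F\simeq c_0$ and $F\setminus\{0\}\subset\Hs[0,1]$. Since $c_0$ is an infinite-dimensional separable Banach space, Baire category gives Hamel dimension at least $\cc$. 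Since $|c_0|=\cc$, the dimension is at most $\cc$. Hence $\dim F=\cc$, and $F$ is the required closed $\cc$-dimensional extension.

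For ``$\Rightarrow$'', suppose $\alpha\ge\aleph_0$. I would show that $(\alpha,\cc)$-spaceability fails, and there are two ways this can happen. Either no admissible $\alpha$-dimensional subspace exists, in which case the existence clause fails and we are done. Or some admissible $E$ with $\dim E=\alpha$ exists, and then I would exhibit one admissible $E$ that admits no closed admissible extension.

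The natural obstruction is to pick $E$ infinite-dimensional with $E\setminus\{0\}\subset\Hs[0,1]$ but $\overline{E}\not\subset\Hs[0,1]\cup\{0\}$. Any closed $F\supset E$ contains $\overline E$, so $F$ would contain a nonzero function outside $\Hs[0,1]$. Concretely, take an admissible infinite-dimensional subspace $M$, say of dimension $\alpha$, using the maximal dense-lineability of \cite{BMPS2021} or the construction of this paper. Add to it a dense family of smooth functions, which have graph dimension $1$. The dimension rule I would rely on is that if $h$ is Lipschitz then $\dimH G_{f+h}=\dimH G_f$, because the map $(x,y)\mapsto(x,y+h(x))$ is bi-Lipschitz.

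A cleaner construction is to take $g\in\Hs[0,1]$ and polynomials $p_n\to -g$ uniformly with $p_n$ linearly independent from $g$. Set $E_0=\operatorname{span}\{g+p_n-p_1 : n\}$, or better $E_0=\operatorname{span}\{g,\ g+q_n\}$ with suitable polynomials $q_n$. The aim is a subspace in which every nonzero element has the form $cg+p$ with $c\ne0$ and $p$ a polynomial, while $g$ lies in the closure of the pure-polynomial part. A simpler design is this: choose polynomials $r_n$ with $\|r_n\|_\infty\to0$ and linearly independent. Set
\[E_0=\operatorname{span}\{g+r_n:n\in\N\}.\]
A nonzero element is $\sum c_n(g+r_n)=(\sum c_n)g+\sum c_nr_n$. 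This lies in $\Hs[0,1]$ when $\sum c_n\ne0$, but when $\sum c_n=0$ we get a nonzero polynomial. So I would instead pass to a quotient-type choice: take $E_0=\operatorname{span}\{g_k+r_{k,n}\}$ where the $g_k$ span an admissible space $M$ of dimension $\alpha$. This requires the sums of coefficients to be controlled, and that is the main obstacle: keeping every nonzero element admissible while making $\overline{E}$ contain a nonzero non-admissible function (or the zero-limit collapsing).

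The approach I would try first avoids polynomials. Take $E=\operatorname{span}\{g_k\}$ with the $g_k$ chosen via a $c_0$-basis $(e_k)$ of the subspace $F$ from \Cref{thm:main}. Choose them so that they are admissible and linearly independent, and so that their norm-closure contains a function of graph dimension different from $s$. For example, set $g_k=e_k/k+\varphi_k$, where the $\varphi_k$ are Lipschitz with $\varphi_k\to\varphi\ne0$ uniformly, and $\varphi$ is smooth. Lipschitz perturbation preserves dimension only for finite sums with a Lipschitz total, so admissibility of $E\setminus\{0\}$ has to be verified through the bi-Lipschitz rule. Meanwhile $\varphi$, of graph dimension $1$, lies in the closure of $E$, which kills every closed extension. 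Verifying the linear-independence bookkeeping is the delicate step I expect to consume the work.
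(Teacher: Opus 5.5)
Your ``$\Leftarrow$'' direction is correct and matches the paper: existence comes from a closed admissible copy of $c_0$, and extension comes from \Cref{thm:main}. One justification needs fixing. Baire category by itself only shows that the Hamel dimension of $c_0$ is uncountable, which gives $\ge\cc$ only under the continuum hypothesis. Use instead, for example, that the vectors $(t^n)_{n\ge1}$, $0<t<1$, are linearly independent in $c_0$ (Vandermonde).

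The genuine gap is in ``$\Rightarrow$'': what you wrote is a search, not a proof. The polynomial designs are rightly abandoned, but the final construction $g_k=e_k/k+\varphi_k$ is never verified. Even if verified, $\operatorname{span}\{g_k\}$ has dimension exactly $\aleph_0$, so the cases $\aleph_0<\alpha\le\cc$ required by the statement are not addressed. Also, the step you single out as delicate is in fact immediate, and seeing why is the missing idea.

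Take $\varphi_k=\varphi$ for all $k$, with $\varphi$ a nonzero polynomial, say $\varphi\equiv1$. A nontrivial finite combination is
\[\sum_k c_kg_k=v+\Bigl(\sum_k c_k\Bigr)\varphi,\qquad v=\sum_k\frac{c_k}{k}e_k\in F\setminus\{0\},\]
where $v\ne0$ because the $e_k$ are linearly independent. Since $(x,y)\mapsto(x,y+c\varphi(x))$ is bi-Lipschitz on $[0,1]\times\R$, this combination has graph dimension $s$, and in particular it is nonzero. So admissibility and linear independence of the $g_k$ come together. Unlike in your $g+r_n$ attempt, the admissible parts $e_k/k$ are themselves independent and cannot cancel.

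For uncountable $\alpha$, extend $\{e_k\}$ to a Hamel basis of $F$ and adjoin $\alpha$ of the remaining basis vectors unchanged. The same computation applies. In every case $g_k\to\varphi$ uniformly, which puts the nonzero function $\varphi\notin\Hs[0,1]$ into every closed extension.

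Once completed, your argument is a genuinely different route from the paper's. The paper takes the dense admissible $\cc$-dimensional subspace $M_0$ of \cite[Theorem 2.6]{BMPS2021} and a countably generated dense $D\subset M_0$. It then uses any $E_\alpha$ with $D\subset E_\alpha\subset M_0$, so that $\overline{E_\alpha}=C[0,1]$. Your version needs only \Cref{thm:main} and the Lipschitz-perturbation rule, the same device the paper uses for \Cref{thm:dense}, so it is self-contained. The paper's version gives the stronger conclusion that the closure is the whole space.
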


\Cref{thm:main} supplies the positive extension result for all finite initial dimensions. The obstruction for infinite initial dimensions is already known from \cite[Corollary 2.11]{LSZ2026}. We give a direct proof, using the maximal dense-lineability established in \cite[Theorem 2.6]{BMPS2021}, that also covers $\alpha=\cc$ under our convention allowing equal cardinals. Thus \Cref{thm:threshold} identifies finiteness of the initial dimension as the exact threshold for closed extensions.

The proof of \Cref{thm:main} uses the localization in \cite[Proposition 3.10]{LSZ2026}, which preserves the graph dimension of every nonzero element of $E$ outside a compact set $K$ of dimension $s-1$. We construct a closed copy of $c_0$ in $C(K)$ whose nonzero elements have graph dimension $s$. Each coordinate is distributed over infinitely many disjoint pieces of $K$ with dimensions tending to $s-1$, ensuring that nonzero infinite combinations retain the required graph dimension. Affine extension across the gaps of $K$ then allows this space to be added to $E$. Since $C[0,1]$ is separable, a closed subspace isomorphic to $c_0$ is complemented by Sobczyk's theorem \cite{Sobczyk}; we also construct a projection directly.

The same finite-dimensional extension leads to dense subspaces. Hausdorff graph dimension is unchanged by adding a polynomial, and polynomials are dense in $C[0,1]$. Perturbing selected algebraic basis vectors of the space supplied by \Cref{thm:main} therefore yields a dense extension while keeping the prescribed initial subspace fixed. The pointwise consequence already follows from the $(1,\cc)$-spaceability in \cite{LSZ2026} and the general criterion in \cite[Theorem 2.3]{CMGP2024}, applied with the nonzero polynomials as the auxiliary set. The result below extends the dense conclusion to every admissible finite-dimensional initial subspace.

\begin{maintheorem}\label{thm:dense}
Let $1<s\le2$ and let $E\subset C[0,1]$ be finite-dimensional, with $E\setminus\{0\}\subset\Hs[0,1]$. Then there is a dense vector subspace $M\subset C[0,1]$ such that
\[E\subset M,\qquad \dim M=\cc,\qquad M\setminus\{0\}\subset\Hs[0,1].\]
Consequently, $\Hs[0,1]$ is $(n,\cc)$-dense-lineable for every $n\in\N$ and maximal pointwise dense-lineable in $C[0,1]$.
\end{maintheorem}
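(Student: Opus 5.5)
We take $F$ from \Cref{thm:main}, applied to $E$, and perturb part of an algebraic basis of $F$ by polynomials. Two facts are used throughout. First, adding a polynomial (indeed any $C^1$ function $p$) does not change the Hausdorff graph dimension. The map $(x,y)\mapsto(x,y+p(x))$ is bi-Lipschitz on $[0,1]\times\R$ and maps $G_f([0,1])$ onto $G_{f+p}([0,1])$. Second, $F\simeq c_0$ is infinite-dimensional and Banach, so its Hamel dimension is $\cc$ (at least $\cc$ by Baire, at most $\cc$ since $C[0,1]$ has cardinality $\cc$).

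\emph{Construction.} Let $n=\dim E$ and pick a Hamel basis $\{e_1,\dots,e_n\}$ of $E$. Extend it to a Hamel basis $\{e_1,\dots,e_n\}\cup\{f_i:i\in I\}$ of $F$ with $|I|=\cc$. Choose countably many distinct indices $i_1,i_2,\ldots\in I$. Let $(p_k)_{k\ge1}$ enumerate a countable dense set of polynomials with rational coefficients in $C[0,1]$. Choose scalars $\lambda_k\neq0$ with $\|\lambda_k f_{i_k}\|_\infty<1/k$. Put $g_k=p_k+\lambda_k f_{i_k}$, and $g_i=f_i$ for $i\in I\setminus\{i_k\}$. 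Let $M=\operatorname{span}(E\cup\{g_i:i\in I\})$.

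\emph{Density.} Given $h\in C[0,1]$ and $\varepsilon>0$, there are infinitely many $k$ with $\|p_k-h\|<\varepsilon/2$, so we may take one with $1/k<\varepsilon/2$. Then $\|g_k/\lambda_k\cdot\lambda_k-h\|=\|g_k-h\|<\varepsilon$. Hence $M$ is dense.

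\emph{Dimension and the graph condition.} A general element of $M$ is $u=e+\sum_{i\in J}c_ig_i$ with $J$ finite and $e\in E$. Then
\[
u=\Bigl(e+\sum_{i\in J}c_i\mu_if_i\Bigr)+p,\qquad \mu_{i_k}=\lambda_k,\quad \mu_i=1 \text{ otherwise},
\]
where $p=\sum_{k:\,i_k\in J}c_{i_k}p_k$ is a polynomial. Write $v=e+\sum_{i\in J}c_i\mu_if_i\in F$. If $v\neq0$, then $v\in\Hs[0,1]$ by \Cref{thm:main}, and so $u=v+p\in\Hs[0,1]$ by the first fact.

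\emph{Main obstacle.} The delicate case is $v=0$. By linear independence of the basis of $F$, $v=0$ forces $e=0$ and all $c_i=0$, since each $\mu_i\ne0$. Hence $u=0$. This settles $M\setminus\{0\}\subset\Hs[0,1]$. The same computation shows that the elements $e_1,\dots,e_n$ and $g_i$ are linearly independent. Therefore $\dim M=\cc$, since also $M\subset C[0,1]$ gives $\dim M\le\cc$, and $E\subset M$ by construction.

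\emph{Consequences.} For any $n$-dimensional admissible $E$ we get a dense $\cc$-dimensional $M\supset E$ with $M\setminus\{0\}\subset\Hs[0,1]$. The existence of some admissible $E$ of dimension $n$ follows from \Cref{thm:main} applied to a line spanned by any element of $\Hs[0,1]$, taking an $n$-dimensional subspace of the resulting $F$. So $\Hs[0,1]$ is $(n,\cc)$-dense-lineable. The case $n=1$ gives maximal pointwise dense-lineability, since $\dim C[0,1]=\cc$.
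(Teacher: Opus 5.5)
Your proof is correct and follows essentially the paper's route: you perturb countably many small Hamel basis vectors of the space $F\supset E$ from \Cref{thm:main} by a dense sequence of rational polynomials, the shift $(x,y)\mapsto(x,y+p(x))$ preserves graph dimension, and your direct linear-independence check replaces the paper's injectivity argument via $F\cap\mathcal P=\{0\}$. One small correction: Baire's theorem only shows that $F$ has uncountable Hamel dimension, so for $\dim F\ge\cc$ use instead the standard fact $\dim c_0=\cc$ (for instance, the sequences $(t^j)_{j\ge1}$, $0<t<1$, are linearly independent by a Vandermonde argument).
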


Section~\ref{sec:compact} develops the construction on a compact set, and Section~\ref{sec:spaceable} proves \Cref{thm:main,thm:threshold}. Section~\ref{sec:dense} then uses \Cref{thm:main} to establish the dense extension in \Cref{thm:dense}.

\section{A construction on a compact set}\label{sec:compact}
We use monotonicity and countable stability of Hausdorff dimension: if $D\subset D'$, then $\dimH D\le\dimH D'$, and
\[
\dimH\Bigl(\bigcup_{i\ge1}D_i\Bigr)=\sup_{i\ge1}\dimH D_i.
\]
In particular, Hausdorff dimension is stable under finite unions and invariant under bi-Lipschitz maps. These properties are also recalled in \cite[Section 2]{LSZ2026}. For a nonempty compact set $K\subset\R$, the product identity $\dimH(K\times\R)=\dimH K+1$ and the inclusion $G_u(K)\subset K\times\R$ give
\begin{equation}\label{eq:upper}
\dimH G_u(K)\le\dimH(K\times\R)=\dimH K+1
\qquad(u\in C(K)).
\end{equation}
We also use the theorem of Bayart and Heurteaux \cite[Theorem 3]{BH}: if $L\subset\R$ is compact and $\dimH L>0$, there is $q\in C(L)$ satisfying $\dimH G_q(L)=\dimH L+1$.

\begin{lemma}\label{lem:c0}
Let $K\subset[0,1]$ be compact and let $d=\dimH K>0$. There is a linear isometry $\Phi:c_0\to C(K)$ such that
\[\dimH G_{\Phi(a)}(K)=d+1\qquad(a\in c_0\setminus\{0\}).\]
Its range is complemented in $C(K)$ by a projection of norm at most $2$.
\end{lemma}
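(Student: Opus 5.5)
The plan is to realize $c_0$ as functions supported on disjoint clopen pieces of $K$, weighted so that every nonzero vector keeps full graph dimension on some piece. Throughout, $K\subset[0,1]$ is compact with $\dimH K=d>0$.

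\textbf{Step 1: clopen pieces with dimension tending to $d$.} Fix a point $x_0\in K$ at which $K$ has full local dimension, that is, $\dimH(K\cap I)=d$ for every open interval $I\ni x_0$. Such a point exists by countable stability: otherwise $K$ is covered by countably many rational intervals of smaller local dimension, and the supremum of those dimensions would still be $d$ only if one of them attains it. I must handle this carefully, perhaps by passing to a subset of dimension close to $d$.

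A cleaner route is to build, for each $j\in\N$, a countable family of pairwise disjoint compact sets $K_{j,m}\subset K$ with the following properties:
\begin{itemize}
\item each $K_{j,m}$ is relatively clopen in $K$, obtained by cutting $K$ at points of $[0,1]\setminus K$ (or accumulating at a single point);
\item $\dimH K_{j,m}\to d$ as $m\to\infty$.
\end{itemize}
Then take the union over all $j$ of these families, still pairwise disjoint, with the pieces shrinking to a point $x_0$ so that functions vanishing at $x_0$ glue continuously.

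If $K$ has no gaps accumulating where needed, for instance if $K$ contains an interval, the situation is easy ($d=1$, and intervals can be subdivided). In general I expect to handle this by finding points of $K$ at which $K$ has local dimension $d$ and splitting along complementary gaps. This combinatorial step, producing infinitely many disjoint clopen pieces with dimension approaching $d$ and accumulating at one point, is the main obstacle. It is also the step where I would first try "there are either infinitely many gaps near $x_0$, or $K$ contains an interval".

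\textbf{Step 2: building blocks.} On each piece apply the Bayart--Heurteaux theorem to get $q_{j,m}\in C(K_{j,m})$ with $\dimH G_{q_{j,m}}=\dimH K_{j,m}+1$. Adding a constant does not change the graph dimension, and neither does composing with an affine map of $\R$. Normalize each $q_{j,m}$ so that its range lies in $[1/2,1]$, then multiply by a cutoff $\varepsilon_{m}$. Set $e_j=\sum_m \chi_{K_{j,m}}\,q_{j,m}$, with all scaled values in $[1/2,1]$ (no decaying factor needed, provided the pieces accumulate only at points where continuity is arranged; if they accumulate at $x_0$, then instead use disjoint supports for different $j$ with values in $[1/2,1]$ and require $a_j\to0$ to give continuity).

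Define $\Phi(a)=\sum_j a_j e_j$. The supports are disjoint and $\|e_j\|_\infty=1$, and continuity holds because $a_j\to0$ and the pieces for index $j$ accumulate only at $x_0$. Then $\|\Phi(a)\|_\infty=\sup_j|a_j|$, so $\Phi$ is an isometry. To ensure each $e_j$ itself is continuous when its pieces accumulate, I would instead scale $q_{j,m}$ into $[\tfrac12,1]$ only for the piece $m=1$ and into $[\tfrac1{2m},\tfrac1m]$ otherwise; the norm remains $1$.

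\textbf{Step 3: dimension and projection.} For $a\neq0$, pick $a_j\neq0$. On $K_{j,m}$ the function $\Phi(a)$ equals $a_jq_{j,m}$ (scaled), whose graph is bi-Lipschitz to that of $q_{j,m}$. Hence $\dimH G_{\Phi(a)}(K)\ge\sup_m(\dimH K_{j,m}+1)=d+1$, and \eqref{eq:upper} gives equality.

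For the projection, choose points $x_j\in K_{j,1}$ where $e_j(x_j)$ is maximal, say $e_j(x_j)=1$, and set $P f=\sum_j f(x_j)\,e_j$. This is not obviously in the range, since $f(x_j)$ need not tend to $0$. So I would instead use $Pf=\sum_j\bigl(f(x_j)-f(x_0)\bigr)e_j$, which lies in $\Phi(c_0)$ because $x_j\to x_0$. Its norm is at most $2$, and $P e_k=e_k$ because $e_k(x_0)=0$ and $e_k(x_j)=\delta_{jk}$. This gives the bound $2$.
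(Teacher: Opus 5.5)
Your Steps 2 and 3 match the paper's argument in substance. Both use disjointly supported blocks whose sizes decay along each $e_j$, so that $\|e_j\|_\infty=1$ and $e_j(x_0)=0$. Both read off the lower bound for $\dimH G_{\Phi(a)}(K)$ on single pieces via a vertical bi-Lipschitz rescaling, and both use the projection $f\mapsto\sum_j(f(x_j)-f(x_0))e_j$ of norm at most $2$. The gap is Step 1, which you flag as the main obstacle: it is left open, and the route you propose cannot work in general. Your definition $e_j=\sum_m\chi_{K_{j,m}}q_{j,m}$ is continuous only if the pieces $K_{j,m}$ are relatively clopen in $K$, and such pieces need not exist. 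For $K=[0,1]$ (so $d=1$, and this case is needed when $s=2$) the only clopen subsets are $\varnothing$ and $K$. Subdividing the interval does not help. Closed subintervals are not clopen in $[0,1]$, and your normalized blocks take values in $[\tfrac1{2m},\tfrac1m]$, away from $0$, at their endpoints, so extension by zero produces jumps. Cutting at points of $[0,1]\setminus K$ does work when $K$ is totally disconnected (for instance whenever $d<1$), but the lemma makes no such assumption.

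The missing idea is that the pieces need not be clopen. The paper takes compact sets $L_m=K\cap D_m$, with $D_m$ a compact interval, inside open intervals $J_m$ whose closures are pairwise disjoint and converge to $x_*$. It then applies Tietze's extension theorem to the function equal to $q_m$ on $L_m$ and to $0$ on $K\setminus J_m$. Since these are disjoint closed subsets of $K$, this gives $v_m\in C(K)$ with the same bound and support in $K\cap\overline{J_m}$. The dimension estimate uses only the values on $L_m$, while disjointness of the sets $K\cap\overline{J_m}$ gives the isometry and $e_i(t_j)=\delta_{ij}$.

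Two smaller points also need repair. First, your existence argument for $x_0$ via countable stability fails as written, since a countable union of sets of dimension $<d$ may have dimension $d$. Use compactness of $K$ to extract a finite subcover, and then finite stability. Second, the pieces with $\dimH L_m>d-d/(m+1)$ come from covering the punctured neighbourhood $(x_*-r_m,x_*+r_m)\setminus\{x_*\}$ by countably many compact intervals and applying countable stability. Here $r_m$ is chosen so that this neighbourhood misses $\overline{J_1},\ldots,\overline{J_{m-1}}$. A single such sequence is then shared among all the $e_j$ through the partition $\ell(j,k)=2^{j-1}(2k-1)$ of $\N$.
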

\begin{proof}
There is $x_*\in K$ such that
\begin{equation}\label{eq:local}
\dimH\bigl(K\cap(x_*-r,x_*+r)\bigr)=d\qquad\text{for every }r>0.
\end{equation}
Indeed, if no such point existed, every point of $K$ would have a neighborhood whose intersection with $K$ has dimension smaller than $d$. Compactness would give a finite subcover, and finite stability would imply $\dimH K<d$.

Choose positive numbers $r_m<1/m$, open intervals $J_m$ with pairwise disjoint closures, and compact sets $L_m\subset K\cap J_m$ satisfying
\begin{equation}\label{eq:pieces}
d-\frac{d}{m+1}<d_m:=\dimH L_m\le d,
\qquad
\overline{J_m}\subset(x_*-r_m,x_*+r_m)\setminus\{x_*\}.
\end{equation}
To make this choice, suppose that $J_1,\ldots,J_{m-1}$ have already been chosen. Their closures are at positive distance from $x_*$, so we can choose $r_m$ as above with $(x_*-r_m,x_*+r_m)$ disjoint from all of them. By \eqref{eq:local} and $d>0$, the set
\[K\cap\bigl((x_*-r_m,x_*+r_m)\setminus\{x_*\}\bigr)\]
has dimension $d$, since removing the singleton $\{x_*\}$, of dimension zero, does not change the positive dimension $d$. Cover the punctured interval by countably many compact intervals contained in it. Countable stability supplies one of these intervals, denoted by $D_m$, for which $\dimH(K\cap D_m)>d-d/(m+1)$. Set $L_m=K\cap D_m$, and choose an open interval $J_m$ containing $D_m$ whose closure remains in the punctured interval. This proves \eqref{eq:pieces}. In particular, $d_m\to d$, and any sequence $z_m\in\overline{J_m}$ converges to $x_*$.

By the theorem of Bayart and Heurteaux, choose $q_m\in C(L_m)$ with $\dimH G_{q_m}(L_m)=d_m+1$. Since $d_m>0$, the function $q_m$ is nonzero: otherwise its graph would have dimension $d_m$. Compactness ensures that $|q_m|$ attains its positive maximum. After normalization and, if necessary, a change of sign, we may assume that $\|q_m\|_\infty=1$ and that $q_m$ takes the value $1$. Prescribe $q_m$ on $L_m$ and zero on $K\setminus J_m$. These disjoint closed subsets of $K$ carry a continuous function with values in $[-1,1]$. The bounded form of Tietze's extension theorem supplies an extension $v_m\in C(K)$ with values in $[-1,1]$. Since $v_m$ takes the value $1$ on $L_m$, we have
\begin{equation}\label{eq:vm}
v_m|_{L_m}=q_m,\qquad
\|v_m\|_\infty=1,\qquad
v_m=0\text{ on }K\setminus J_m.
\end{equation}
For $v\in C(K)$, its support is $\operatorname{supp}_K v=\overline{\{x\in K:v(x)\ne0\}}^{\,K}$, where the closure is taken in $K$. Thus the supports of the $v_m$ are contained in the pairwise disjoint sets $K\cap\overline{J_m}$.

For $j,k\in\N$, put $\ell(j,k)=2^{j-1}(2k-1)$. Every positive integer has a unique factorization as a power of $2$ times an odd positive integer. Hence the sets $\{\ell(j,k):k\in\N\}$, indexed by $j$, partition $\N$, and $\ell(j,1)\to\infty$ as $j\to\infty$. For each fixed $j$, we also have $\ell(j,k)\to\infty$ as $k\to\infty$. Define
\begin{equation}\label{eq:ej}
e_j=\sum_{k=1}^{\infty}2^{1-k}v_{\ell(j,k)}.
\end{equation}
For integers $t>r\ge0$, disjointness and \eqref{eq:vm} give
\[\left\|\sum_{k=r+1}^{t}2^{1-k}v_{\ell(j,k)}\right\|_\infty=2^{-r}.
\]
Thus the series in \eqref{eq:ej} converges uniformly, $e_j\in C(K)$, and $\|e_j\|_\infty=1$. The same disjointness gives
\begin{equation}\label{eq:pointwise}
e_i(x)e_j(x)=0\quad(x\in K,\ i\ne j),
\qquad e_j(x_*)=0\quad(j\in\N).
\end{equation}

Let $a=(a_j)\in c_0$. For integers $t>r\ge0$, equations \eqref{eq:vm}--\eqref{eq:pointwise} imply
\begin{equation}\label{eq:norm-finite}
\left\|\sum_{j=r+1}^{t}a_je_j\right\|_\infty
=\max_{r<j\le t}|a_j|.
\end{equation}
For the upper bound, at most one summand is nonzero at each point of $K$. For the reverse inequality, each $e_j$ takes the value $1$ at a point of $L_{\ell(j,1)}$, where all the other functions vanish. Since $a_j\to0$, the partial sums are uniformly Cauchy. We may therefore define
\[\Phi(a)=\sum_{j=1}^{\infty}a_je_j\in C(K).\]
Taking limits in \eqref{eq:norm-finite}, with $r=0$, gives $\|\Phi(a)\|_\infty=\|a\|_\infty$. Hence $\Phi$ is a linear isometry and $M:=\Phi(c_0)$ is closed.

To prove the dimension identity, let $a\ne0$ and choose $j_0$ with $a_{j_0}\ne0$. On $L_{\ell(j_0,k)}$, equations \eqref{eq:vm} and \eqref{eq:ej} give
\[\Phi(a)=a_{j_0}2^{1-k}q_{\ell(j_0,k)}.\]
For each fixed $k$, the scalar $a_{j_0}2^{1-k}$ is nonzero, so the vertical rescaling $(x,y)\mapsto(x,a_{j_0}2^{1-k}y)$ is bi-Lipschitz. Its constants may depend on $k$, which does not affect the dimension identity on each individual set. Thus
\begin{align*}
\dimH G_{\Phi(a)}(K)
&\ge\dimH G_{\Phi(a)}\bigl(L_{\ell(j_0,k)}\bigr)\\
&=\dimH G_{q_{\ell(j_0,k)}}\bigl(L_{\ell(j_0,k)}\bigr)
=d_{\ell(j_0,k)}+1.
\end{align*}
Since $\ell(j_0,k)\to\infty$ as $k\to\infty$, equation \eqref{eq:pieces} gives the lower bound $d+1$. The upper bound follows from \eqref{eq:upper}.

Finally, choose $t_j\in L_{\ell(j,1)}$ with $q_{\ell(j,1)}(t_j)=1$. Then $t_j\to x_*$, $e_i(t_j)=\delta_{ij}$, and \eqref{eq:pointwise} gives $\Phi(a)(x_*)=0$. Define
\[B:C(K)\longrightarrow c_0,\qquad
Bf=(f(t_j)-f(x_*))_{j\ge1}.\]
Continuity of $f$ shows that $Bf\in c_0$, and
\[\|Bf\|_\infty\le2\|f\|_\infty,
\qquad B\Phi(a)=a.\]
It follows that $P_K:=\Phi B:C(K)\to M$ is bounded, $\|P_K\|\le2$, and $P_K|_M$ is the identity. Moreover, $P_K^2=\Phi B\Phi B=\Phi B=P_K$, so $P_K$ is the required projection.
\end{proof}

\section{Proofs of the spaceability results}\label{sec:spaceable}
The following localization is the case $m=1$ of \cite[Proposition 3.10]{LSZ2026}. We include the reduction to that statement, together with the zero-dimensional case.

\begin{lemma}\label{lem:localization}
Let $1<s\le2$ and let $E\subset C[0,1]$ be finite-dimensional, with $E\setminus\{0\}\subset\Hs[0,1]$. There is a compact set $K\subset[0,1]$ such that $\dimH K=s-1$ and
\begin{equation}\label{eq:outside}
\dimH G_h([0,1]\setminus K)=s
\qquad(h\in E\setminus\{0\}).
\end{equation}
\end{lemma}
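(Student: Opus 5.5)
The plan is to split according to whether $E=\{0\}$ or $\dim E=n\ge1$, and in the latter case to quote the case $m=1$ of \cite[Proposition 3.10]{LSZ2026}.

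\emph{Zero-dimensional case.} If $E=\{0\}$, condition \eqref{eq:outside} is vacuous. It then suffices to produce a compact $K\subset[0,1]$ with $\dimH K=s-1$. For $1<s<2$ take a self-similar Cantor set built from two maps of ratio $2^{-1/(s-1)}<1/2$; by the standard open-set-condition formula its dimension is $s-1\in(0,1)$. For $s=2$ take $K=[0,1]$, which has dimension $1=s-1$ as required.

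\emph{Case $\dim E=n\ge1$.} Fix a basis $h_1,\dots,h_n$ of $E$. I expect \cite[Proposition 3.10]{LSZ2026}, in the case $m=1$, to state the following: given such an $n$-dimensional $E$ with $E\setminus\{0\}\subset\Hs[0,1]$, there is a compact $K$ with $\dimH K=s-1$ such that every nonzero $h\in E$ keeps graph dimension $s$ on $[0,1]\setminus K$. In the paper's words it is the localization that ``preserves the graph dimension of every nonzero element of $E$ outside a compact set $K$ of dimension $s-1$.'' The reduction then amounts to checking the hypotheses: $E$ is finite-dimensional, and each nonzero element lies in $\Hs[0,1]$.

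The only translation needed concerns the form of the conclusion. The cited result may be phrased with $K$ of dimension at most $s-1$, or with $K$ a finite union of intervals' complement, and so on. If it gives only $\dimH K\le s-1$, I would enlarge $K$ by adjoining a disjoint compact set $C$ of dimension exactly $s-1$, as constructed in the zero-dimensional case. This enlargement does not break \eqref{eq:outside} provided $C$ is placed where $E$ does not need it. To arrange that, I use the same countable-stability argument as in Lemma~\ref{lem:c0}. For each basis-related function there are subintervals of $[0,1]\setminus K$ carrying graph dimension near $s$. More robustly, for each $h\ne0$ the dimension $s$ is already attained on $[0,1]\setminus K$, while a small interval $I$ has graph dimension of $h$ on $I$ at most $s$. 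So I would instead take $C$ inside a tiny interval near a point of $K$, or at an endpoint.

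The main obstacle is making this adjustment uniform over the uncountably many nonzero $h\in E$, because removing $C$ might destroy the dimension of some $h$. I would handle it by observing that graph dimension of $h$ on $[0,1]\setminus K$ equals the supremum over the countably many compact rational intervals $I\subset[0,1]\setminus K$. I would therefore place $C$ inside $K$'s own neighbourhood, that is, take $K'=K\cup C$ with $C\subset K$-adjacent gap pieces. Alternatively, I would simply rely on the cited statement providing exact dimension $s-1$, which I expect it does.
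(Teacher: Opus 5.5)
Your route is the paper's. You split into $E=\{0\}$ (a Cantor set of dimension $s-1$, or $K=[0,1]$ when $s=2$; your ratio $2^{-1/(s-1)}<1/2$ is correct) and $\dim E\ge1$, where you quote the case $m=1$ of \cite[Proposition 3.10]{LSZ2026}. Your guess about its conclusion is also right: it yields a compact set of dimension exactly $s-1$, and its property (3) is \eqref{eq:outside}.

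What the reduction is missing is a hypothesis you did not list. The proposition is applied with the ambient compact set $[0,1]$. Besides the span condition on $E$, it carries a measure hypothesis on that set. The paper checks this via $\mathcal H^{s-1}([0,1])=+\infty$ for $1<s<2$ and $\mathcal H^{1}([0,1])=1$ for $s=2$, the latter under the normalization fixed in the introduction. Since the proof consists essentially of this citation, that verification belongs in it.

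You should also delete the fallback enlargement, because as written it is not an argument. Passing from $K$ to $K\cup C$ removes $C\setminus K$ from the set in \eqref{eq:outside}. By \eqref{eq:upper}, the graph of $h$ over $C$ may have dimension as large as $\dimH C+1=s$, so some $h$ could lose dimension $s$ outside $K\cup C$. Putting $C$ in gaps adjacent to $K$ does not prevent this.

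Countable stability, or a compactness argument giving a point where the local graph dimension of $h$ on $[0,1]\setminus K$ equals $s$, only shows that some placement of $C$ works for each single $h$, or finitely many. It does not give one placement that works for all of the uncountably many $h\in E\setminus\{0\}$. That uniformity is exactly what the cited proposition provides, so once you use its exact-dimension conclusion no adjustment is needed.
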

\begin{proof}
If $E=\{0\}$, take any compact set of Hausdorff dimension $s-1$. For $s<2$ a Cantor set of that dimension suffices, and for $s=2$ take $K=[0,1]$. There is no nonzero $h$ to check in \eqref{eq:outside}.

If $\dim E=n\ge1$, choose a basis $\{f_1,\ldots,f_n\}$ of $E$. In \cite[Proposition 3.10]{LSZ2026}, use the ambient compact set $[0,1]$ and $m=1$. Its measure hypothesis holds because $\mathcal H^{s-1}([0,1])=+\infty$ for $1<s<2$, whereas $\mathcal H^1([0,1])=1$ under our normalization. Its span hypothesis is the assumption on $E$. The resulting compact set has dimension $s-1$, and property~(3) of that proposition is \eqref{eq:outside}.
\end{proof}

\begin{proof}[Proof of \Cref{thm:main}]
Set $d=s-1$ and choose $K$ by \cref{lem:localization}. Apply \cref{lem:c0} to obtain $M=\Phi(c_0)\subset C(K)$. Let
\[A:C(K)\longrightarrow C[0,1]\]
be the following affine extension operator. The gaps are the connected components of the relatively open set $[0,1]\setminus K$. Each bounded interior gap is an interval $(a,b)$ with $a,b\in K$, and the possible exterior components meet $0$ or $1$ without having both endpoints in $K$. Set $Au=u$ on $K$ and
\[
(Au)(x)=\frac{b-x}{b-a}u(a)+\frac{x-a}{b-a}u(b)
\qquad(x\in(a,b))
\]
on each interior gap. On the left exterior component set $Au=u(\min K)$, and on the right exterior component set $Au=u(\max K)$.

This construction is linear and continuous on each gap and at its endpoints. To check continuity at $x\in K$, consider a sequence tending to $x$ through gaps. On any fixed gap whose closure contains $x$, the formula gives convergence to $u(x)$. Along pairwise distinct gaps, their lengths tend to zero, since they are disjoint subintervals of $[0,1]$; hence both endpoints tend to $x$. Uniform continuity of $u$ on $K$ and the convex combination in the formula again give convergence to $u(x)$. The constant exterior extensions have the same property. Thus $Au\in C[0,1]$. On every gap its values lie between the endpoint values, so $\|Au\|_\infty=\|u\|_\infty$. Consequently, $A$ is a linear isometry, and $N:=A(M)$ is closed because $M$ is complete. Set $F=E+N$.

Let $\mathcal J$ denote the family of components of $[0,1]\setminus K$. This family is countable, since each component contains a rational point and distinct components are disjoint. For $g\in N$ and $J\in\mathcal J$, write $g(x)=b_Jx+c_J$ on $J$. The affine map
\[T_J:\R^2\longrightarrow\R^2,\qquad
T_J(x,y)=(x,y+b_Jx+c_J)\]
has the Lipschitz inverse $T_J^{-1}(x,y)=(x,y-b_Jx-c_J)$, so it is bi-Lipschitz and maps $G_h(J)$ onto $G_{h+g}(J)$. The Lipschitz constants may depend on $J$; countable stability only requires preservation of dimension on each component separately. Therefore, for $h\in E\setminus\{0\}$, countable stability and \eqref{eq:outside} give
\begin{align*}
\dimH G_{h+g}([0,1]\setminus K)
&=\sup_{J\in\mathcal J}\dimH G_{h+g}(J)\\
&=\sup_{J\in\mathcal J}\dimH G_h(J)
=s.
\end{align*}
On $K$, equation \eqref{eq:upper} gives $\dimH G_{h+g}(K)\le d+1=s$. By finite stability, $h+g\in\Hs[0,1]$.

If $h=0$ and $g\in N\setminus\{0\}$, write $g=Au$ with $u\in M\setminus\{0\}$. By \cref{lem:c0}, $\dimH G_g(K)=\dimH G_u(K)=s$. On each $J\in\mathcal J$, the graph of $g$ has dimension at most $1$. Countable stability gives $\dimH G_g([0,1]\setminus K)\le1$, and finite stability again gives $\dimH G_g([0,1])=s$. Consequently,
\begin{equation}\label{eq:Fgood}
F\setminus\{0\}\subset\Hs[0,1].
\end{equation}

Every element of $N$ has graph dimension at most $1$ outside $K$. By \eqref{eq:outside} and $s>1$, no nonzero element of $E$ can belong to $N$. Thus $E\cap N=\{0\}$ and $F=E\oplus N$. The sum of a closed subspace and a finite-dimensional subspace is closed. The addition map $E\times N\to F$, $(h,g)\mapsto h+g$, is a bounded linear bijection between Banach spaces; by the bounded inverse theorem, this direct sum is topological. Since $N\simeq c_0$ and $E$ is finite-dimensional, we obtain
\[F\simeq E\oplus c_0\simeq c_0.\]
The last isomorphism follows by identifying $E$ with $\R^{\dim E}$ and adjoining these finitely many coordinates to a null sequence.

Let $R:C[0,1]\to C(K)$ be restriction, and let $P_K$ be the projection in \cref{lem:c0}. The operator $P_N:=AP_KR:C[0,1]\to N$ is bounded. For $g=Au\in N$, the identities $RAu=u$ and $P_Ku=u$ give $P_Ng=g$. Hence $P_N$ is a projection onto $N$.

Write $I$ for the identity operator on $C[0,1]$ and $\ker P_N=\{f\in C[0,1]:P_Nf=0\}$. The space $W:=(I-P_N)E$ is finite-dimensional and contained in $\ker P_N$. To construct a bounded projection $Q:\ker P_N\to W$, choose a basis $w_1,\ldots,w_r$ of $W$ and its coordinate functionals $\lambda_1,\ldots,\lambda_r$. These functionals are bounded because $W$ is finite-dimensional. By Hahn--Banach, extend them to bounded functionals $\widetilde\lambda_i$ on $\ker P_N$, and set $Qz=\sum_{i=1}^r\widetilde\lambda_i(z)w_i$. Then $Q|_W=I_W$. If $W=\{0\}$, take $Q=0$. We have $F=N\oplus W$, because each $h\in E$ decomposes as $P_Nh+(I-P_N)h$, and $N\cap W=\{0\}$. Define
\[\Pi:C[0,1]\longrightarrow F,\qquad
\Pi f=P_Nf+Q((I-P_N)f).\]
This is a bounded linear operator. For $n\in N$ and $w\in W$, it satisfies $\Pi(n+w)=n+w$. Thus it is a projection onto $F$, completing the proof.
\end{proof}

The obstruction to closed extensions in \Cref{thm:threshold} can be seen directly from the existence of an admissible dense subspace. This also explains why spaceability is compatible with failure of the extension property for infinite-dimensional initial subspaces.

\begin{proof}[Proof of \Cref{thm:threshold}]
Applying \Cref{thm:main} with $E=\{0\}$ gives a closed copy of $c_0$ in $\Hs[0,1]\cup\{0\}$. Since the algebraic dimension of $c_0$ is $\cc$, the existence requirement in the definition of $(\alpha,\cc)$-spaceability holds for every finite $\alpha$. The extension property follows from \Cref{thm:main}.

For the negative assertion, \cite[Theorem 2.6]{BMPS2021} supplies a dense subspace $M_0\subset C[0,1]$ of dimension $\cc$ such that $M_0\setminus\{0\}\subset\Hs[0,1]$. Indeed, that theorem imposes both Hausdorff and box dimensions equal to $s$, so its conclusion is stronger than needed here. Since $C[0,1]$ has a countable base and $M_0$ is dense, choosing one point of $M_0$ in each nonempty basic open set gives a countable dense set $\{u_j:j\in\N\}\subset M_0$. Set $D=\operatorname{span}\{u_j:j\in\N\}$. Then $D$ is dense and $\dim D=\aleph_0$: its dimension is at most countable and cannot be finite, because finite-dimensional subspaces of $C[0,1]$ are closed.

Fix $\aleph_0\le\alpha\le\cc$. Extend a Hamel basis of $D$ to one of $M_0$, and choose a subset of the latter basis of cardinality $\alpha$ containing the basis of $D$. Its span $E_\alpha$ satisfies
\[
D\subset E_\alpha\subset M_0,\qquad
\dim E_\alpha=\alpha,\qquad
E_\alpha\setminus\{0\}\subset\Hs[0,1].
\]
Any closed subspace of $C[0,1]$ containing $E_\alpha$ must contain $\overline D=C[0,1]$, and hence must equal $C[0,1]$. Such a subspace cannot have all its nonzero elements in $\Hs[0,1]$, since a nonzero constant function has graph dimension $1<s$. Therefore the required closed extension fails for every infinite $\alpha$, including $\alpha=\cc$.
\end{proof}

The construction in \Cref{lem:c0} uses countable stability of Hausdorff dimension, which fails for lower box dimension. It therefore does not settle \cite[Question 3.20]{LSZ2026}.

\section{Dense extensions of finite-dimensional subspaces}\label{sec:dense}
Having established the closed extensions, we now turn to density in the ambient space. The construction below preserves the initial subspace exactly and uses polynomial perturbations to make the extended subspace dense.

\begin{proof}[Proof of \Cref{thm:dense}]
Let $\mathcal P$ be the vector space of real polynomials restricted to $[0,1]$. For $p\in\mathcal P$, the map
\[
S_p:[0,1]\times\R\longrightarrow[0,1]\times\R,
\qquad S_p(x,y)=(x,y+p(x)),
\]
is bi-Lipschitz: $p$ is Lipschitz on $[0,1]$, and $S_p^{-1}=S_{-p}$ has the same property. Since $S_p$ maps $G_f([0,1])$ onto $G_{f+p}([0,1])$, invariance of Hausdorff dimension gives
\begin{equation}\label{eq:polynomial-perturbation}
\dimH G_{f+p}([0,1])=\dimH G_f([0,1])
\qquad(f\in C[0,1],\ p\in\mathcal P).
\end{equation}

By \Cref{thm:main}, choose a closed subspace $F\supset E$ with $F\simeq c_0$ and $F\setminus\{0\}\subset\Hs[0,1]$. In particular, $\dim F=\cc$. Every polynomial has a graph of Hausdorff dimension $1$, because its graph parametrization $x\mapsto(x,p(x))$ is bi-Lipschitz onto its image. As $s>1$, it follows that
\begin{equation}\label{eq:polynomial-disjointness}
F\cap\mathcal P=\{0\}.
\end{equation}

The polynomials with rational coefficients form a countable dense subset of $C[0,1]$, by the Weierstrass approximation theorem and approximation of finitely many coefficients by rational numbers. Choose a sequence $(p_n)_{n\ge1}$ in which every such polynomial occurs infinitely often. Since $F$ is infinite-dimensional and $E$ is finite-dimensional, we can inductively choose
\[
h_n\in F\setminus\bigl(E+\operatorname{span}\{h_1,\ldots,h_{n-1}\}\bigr),
\qquad \|h_n\|_\infty\le\frac1n.
\]
Indeed, at each step the excluded subspace is finite-dimensional, and a nonzero vector outside it can be scaled to satisfy the norm bound.

Let $\mathcal B_E$ be a basis of $E$ (empty if $E=\{0\}$), and extend the linearly independent set $\mathcal B_E\cup\{h_n:n\ge1\}$ to a Hamel basis $\mathcal B$ of $F$. Define an algebraic linear map $T:F\to C[0,1]$ by
\[
T(h_n)=h_n+p_n\quad(n\ge1),
\qquad T(b)=b\quad\bigl(b\in\mathcal B\setminus\{h_n:n\ge1\}\bigr).
\]
Every vector has a finite expansion in $\mathcal B$, so this defines $T$ on all of $F$. No continuity of $T$ is required. We have
\[T(v)-v\in\mathcal P\qquad(v\in F).\]
If $T(v)=0$, then $v\in F\cap\mathcal P$, and \eqref{eq:polynomial-disjointness} gives $v=0$. Thus $T$ is injective. Moreover, $T$ fixes $E$, and \eqref{eq:polynomial-perturbation} shows that $T(v)\in\Hs[0,1]$ whenever $v\ne0$. Hence $M:=T(F)$ satisfies
\[E\subset M,\qquad \dim M=\cc,\qquad M\setminus\{0\}\subset\Hs[0,1].\]

To verify density, fix a polynomial $p$ with rational coefficients and choose increasing indices $n_k$ with $p_{n_k}=p$. Then
\[T(h_{n_k})=h_{n_k}+p\longrightarrow p
\quad\text{uniformly on }[0,1],\]
because $\|h_{n_k}\|_\infty\le1/n_k$. Therefore $\overline M$ contains all polynomials with rational coefficients, and $\overline M=C[0,1]$.

For each $n\in\N$, applying \Cref{thm:main} to the zero subspace supplies a copy of $c_0$ whose nonzero elements belong to $\Hs[0,1]$, and hence an admissible $n$-dimensional initial subspace. The dense extension just proved gives $(n,\cc)$-dense-lineability. Taking $E=\operatorname{span}\{f\}$ for each $f\in\Hs[0,1]$ yields pointwise $\cc$-dense-lineability. Finally, $\dim C[0,1]=\cc$: the copy of $c_0$ gives the lower bound, while continuous functions are determined by their values on a countable dense set, giving $|C[0,1]|\le\cc$. Thus the pointwise dense-lineability is maximal.
\end{proof}

\medskip
\noindent\textbf{Funding.} This research was partially supported by CNPq (Brazil) under grants 406457/2023-9 (CNPq/MCTI Call 10/2023) and 403964/2024-5 (MCTI/CNPq Call 16/2024). Daniel M. Pellegrino also acknowledges CNPq grant 305807/2025-0, and Anselmo Raposo Jr. acknowledges CNPq grant 302341/2025-0.

\medskip
\noindent\textbf{Data availability.} No datasets were generated or analyzed in this study.

\medskip
\noindent\textbf{Code availability.} Not applicable.

\medskip
\noindent\textbf{Declaration of generative AI and AI-assisted technologies.} During the preparation of this manuscript, the authors used OpenAI's ChatGPT as a generative-AI tool for exploratory calculations, consistency checks, organization of arguments, and editorial assistance. All AI-assisted material retained in the manuscript was independently reviewed and verified by the authors, who assume full responsibility for the accuracy, originality, and integrity of the work.

\medskip
\noindent\textbf{Competing interests.} The authors declare that they have no competing interests.


\begin{thebibliography}{99}
\bibitem{AGS2005}
R. Aron, V. I. Gurariy and J. B. Seoane-Sep\'ulveda,
\newblock Lineability and spaceability of sets of functions on $\mathbb R$,
\newblock \emph{Proc. Amer. Math. Soc.} \textbf{133} (2005), no. 3, 795--803.

\bibitem{BH}
F. Bayart and Y. Heurteaux,
\newblock On the Hausdorff dimension of graphs of prevalent continuous functions on compact sets,
\newblock in \emph{Further Developments in Fractals and Related Fields}, Trends Math., Birkh\"auser/Springer, 2013, pp. 25--34.

\bibitem{BMPS2021}
A. Bonilla, G. A. Mu\~noz-Fern\'andez, J. A. Prado-Bassas and J. B. Seoane-Sep\'ulveda,
\newblock Hausdorff and Box dimensions of continuous functions and lineability,
\newblock \emph{Linear Multilinear Algebra} \textbf{69} (2021), no. 4, 593--606.

\bibitem{CMGP2024}
M. C. Calder\'on-Moreno, P. J. Gerlach-Mena and J. A. Prado-Bassas,
\newblock Infinite pointwise lineability: general criteria and applications,
\newblock \emph{Rev. R. Acad. Cienc. Exactas F\'is. Nat. Ser. A Mat. RACSAM} \textbf{118} (2024), art. 25.

\bibitem{CFST2022}
J. Carmona Tapia, J. Fern\'andez-S\'anchez, J. B. Seoane-Sep\'ulveda and W. Trutschnig,
\newblock Lineability, spaceability, and latticeability of subsets of $C([0,1])$ and Sobolev spaces,
\newblock \emph{Rev. R. Acad. Cienc. Exactas F\'is. Nat. Ser. A Mat. RACSAM} \textbf{116} (2022), art. 113.

\bibitem{FPRR2024}
V. V. F\'avaro, D. Pellegrino, A. B. Raposo Jr. and G. S. Ribeiro,
\newblock General criteria for a stronger notion of lineability,
\newblock \emph{Proc. Amer. Math. Soc.} \textbf{152} (2024), no. 3, 941--954.

\bibitem{FPT2020}
V. V. F\'avaro, D. Pellegrino and D. Tomaz,
\newblock Lineability and spaceability: a new approach,
\newblock \emph{Bull. Braz. Math. Soc. (N.S.)} \textbf{51} (2020), no. 1, 27--46.

\bibitem{Gurariy1966}
V. I. Gurariy,
\newblock Subspaces and bases in spaces of continuous functions,
\newblock \emph{Dokl. Akad. Nauk SSSR} \textbf{167} (1966), no. 5, 971--973 (in Russian).

\bibitem{LSZ2026}
J. Liu, S. Shi and Z. Zhang,
\newblock On strong spaceability of continuous functions and fractal dimensions,
\newblock arXiv:2605.25037v2, 2026.

\bibitem{LZS2025}
J. Liu, Y. Zhang and S. Shi,
\newblock On the fractal dimensions of continuous functions and algebraic genericity,
\newblock \emph{J. Math. Anal. Appl.} \textbf{546} (2025), no. 2, 129234.

\bibitem{PR2021}
D. Pellegrino and A. Raposo Jr.,
\newblock Pointwise lineability in sequence spaces,
\newblock \emph{Indag. Math. (N.S.)} \textbf{32} (2021), no. 2, 536--546.

\bibitem{Sobczyk}
A. Sobczyk,
\newblock Projection of the space $(m)$ on its subspace $(c_0)$,
\newblock \emph{Bull. Amer. Math. Soc.} \textbf{47} (1941), no. 12, 938--947.
\end{thebibliography}
\end{document}